\newtheorem{theorem}{Theorem}
\newtheorem*{theorem*}{Theorem}
\theoremstyle{definition}
\newcommand{\ess}[1]{\hat{#1}}
\newcommand{\incentive}{\varphi}
\g@addto@macro{\endabstract}{\@setabstract}
\newcommand{\authorfootnotes}{\renewcommand\thefootnote{\@fnsymbol\c@footnote}}%
\begin{document}

\title{Mean Evolutionary Dynamics for Stochastically Switching Environments}
\author{Marc Harper}
\address[Marc Harper]{Department of Genomics and Proteomics, UCLA}
\email[Marc Harper]{marcharper@ucla.edu (corresponding author)}
\author{Dashiell Fryer}
\author{Andrew Vlasic}

\begin{center}
  \LARGE 
  Mean Evolutionary Dynamics for Stochastically Switching Environments \par \bigskip

  \normalsize
  \authorfootnotes
  Marc Harper\footnote{corresponding author, email: marcharper@ucla.edu}\textsuperscript{1}, Dashiell Fryer\textsuperscript{2},
  Andrew Vlasic\textsuperscript{3} \par \bigskip

  \textsuperscript{1}Department of Genomics and Proteomics, UCLA \par
  \textsuperscript{2}Department of Mathematics, Pomona College\par
  \textsuperscript{3}Department of Mathematics, Indiana University East\par \bigskip
  
\end{center}


\begin{abstract}
Populations of replicating entities frequently experience sudden or cyclical changes in environment. We explore the implications of this phenomenon via a environmental switching parameter in several common evolutionary dynamics models including the replicator dynamic for linear symmetric and asymmetric landscapes, the Moran process, and incentive dynamics. We give a simple relationship between the probability of environmental switching, the relative fitness gain, and the effect on long term behavior in terms of  fixation probabilities and long term outcomes for deterministic dynamics. We also discuss cases where the dynamic changes, for instance a population evolving under a replicator dynamic switching to a best-reply dynamic and vice-versa, giving Lyapunov stability results.
\end{abstract}

\section{Introduction}

Many replicating organisms live in periodically or suddenly changing selective landscapes. It may be easier to list non-examples than to attempt to describe the many biological scenarios for which this phenomenon holds; we describe a few for motivation. Intermittent or frequent flooding is a very simple example of a stochastically switching environment. Such organisms may behave very differently depending on the environment, for instance in the presence of water, such as the flowering of desert plants and the reproductive cycle of some amphibians. Sometimes the organism can survive in both environments yet can only reproduce in one environment, or the organism reproduces at very different rates in the two environments. An adaptive advantage of a subpopulation may fixate quickly or not at all depending on the relative advantage, the cost of the variant behavior or trait, and the frequency at which the environments switch.

Sickle-cell anemia is a well-known instance of heterozygote advantage in humans \cite{allison1954protection} \cite{aidoo2002protective}, which we frame simply in the following way. During \emph{normal} environmental conditions, having the gene for sickle-cell anemia on both chromosomes (homozygous for sickle-cell) is maladaptive, and simply being a carrier (heterozygous) is also of lower fitness because some offspring may exhibit the disease. During malaria outbreaks, however, sickle-cell carriers have a fitness advantage due to conferred malaria resistance. Thus there is selection pressure against the extinction of malarial-resistance alleles in places where malaria outbreaks are common, and we can model this as a population of two types (sickle-cell carriers and wild-type) with two different selective landscapes that alter the order of fitness for the two types. Call these two landscapes $E_1$, corresponding to the non-malarial environment, and $E_2$, when malaria is present in the population, food-supply, or when the population is otherwise exposed to malaria. We will assume that the population experiences environment $E_1$ with probability $p$ and environment $E_2$ with probability $1-p$.

The social amoeba Dictyostelium discoideum has a complex life-cycle \cite{loomis1982development} \cite{devreotes1989dictyostelium} \cite{eichinger2005genome}, which we simplistically model as follows. For part of the life cycle, Dictyostelium cells behave independently as single-celled organisms, foraging for food. As food becomes scarce, some of the single-cells come back together to form a spore-dispersing fruiting-body. We model this as a stochastic environment corresponding to the presence of food or not, and two population types (cooperators and defectors). The $E_1$ landscape is the independent single-celled case, where defectors have the advantage, since cooperating together would lessen the total area in which the population forages, concentrating the subpopulation of cooperators in a smaller area (thus making less food available to cooperators on average). The $E_2$ environment favors cooperators, since as food becomes scarce, the cells that participate in the fruiting body have a larger change of reproducing and surviving in the longer term than the individuals that continue to forage locally. $E_1$ is a prisoner's dilemma favoring defectors and $E_2$ could be modeled as a constant relative fitness landscape with $r > 1$ for cooperators (the fitness for defectors is non-zero since the defectors could stumble upon a new food source).

We can also interpret the proposed model as giving a relationship between the evolution and fixation of traits that respond to rare events and environments in terms of the difference in fitnesses conferred by the trait and probability of the rare environment. Intuitively, a sufficiently rare event may not provide enough \emph{exposure} for a relevant trait to be fixate, such as an organism being crushed by a meteor. This event is likely too rare and too extreme for a trait defending against it to arise, and even if such a trait arises by chance, it is unlikely to persist selectively because of the rarity of the event. More generally, if the cost of the responsive trait is too large and the alternate environment is not common enough, such a trait may fail to persist in the population.

This simple model is in contrast to a model in which the fitness landscape is explicitly time-dependent, such as temperature variation in different seasons, in which the environment continuously cycles through different extremes. Although such a model may be more accurate, it poses substantial analytic difficulties. Henceforth we will not consider explicit examples of the above scenarios or others; rather we pursue models that could apply to these scenarios and many others in the context of evolutionary dynamics. We present our models in terms of a deterministic replicator dynamic \cite{hofbauer2003evolutionary} \cite{nowak2006evolutionary} \cite{weibull1995evolutionary} \cite{hofbauer1998evolutionary} with symmetric and asymmetric landscapes, the Moran process \cite{moran1958random} \cite{moran1962statistical} \cite{taylor2004evolutionary} \cite{nowak2004emergence} \cite{fudenberg2004stochastic}, a probabilistic Markov process, and for the deterministic incentive dynamics \cite{fryer2012existence}. In the deterministic cases, we are essentially assuming infinitely large populations in which in any given time interval, the environments $E_1$ and $E_2$ are experienced for proportions $p$ and $1-p$ of the interval, respectively. A more realistic model would be that the population experiences each landscape continuously for intervals of relative length $p$ and $1-p$, however we find good agreement with the Moran process in terms of fixation probabilities \cite{antal2006fixation}, which are essentially averaged over all sequences of $E_1$ and $E_2$ appearing with probabilities $p$ and $1-p$. In other words, we primarily investigate mean dynamics rather than those in which the switching parameter is truly stochastic (e.g. a random variable). Such models may have significantly more interesting behaviors, but are much more difficult to approach analytically, and are often analyzed in the context of the average system \cite{belykh2011dynamics}. Hence our strategy is to first understand the mean dynamics to light the way for more stochastic approaches.

\section{Deterministic Replicator Dynamics for Symmetric Landscapes}

We first consider the implications of the replicator equation, a standard continuous and deterministic model of selection \cite{nowak2006evolutionary} \cite{taylor2004evolutionary}. The replicator equation takes a fitness landscape $f$ on $n$-types in an infinitely large population. The proportions of each type is denoted by $x_i$ for $1 \leq i \leq n$, and the proportions evolve in time as
\[ \dot{x}_i = x_i (f_i(x) - \bar{f}(x)),\]
where $\bar{f}(x) = \sum_i x_i f_x(x)$ is the mean fitness. A common fitness landscape is $f(x) = A x$, where $A$ is a game-matrix for the fitness payouts from interactions between the replicating individuals of each type.

\subsection{Constant Relative Fitness Landscapes}

Abstractly, consider a model where a population consists of two types of replicators, and a stochastic environmental occurrence (such as malaria outbreaks or frequent flooding). For a population of two replicating types 1 and 2, suppose during environment $E_1$, occurring with probability $p$, that type 1 has relative fitness $s$ and that during environment $E_2$, occurring with probability $1-p$, that type 1 has relative fitness $t$. Let $A_r$ denote the matrix 
\[ A_r = \left( \begin{matrix} r & r \\ 1 & 1 \end{matrix} \right), \] which indicates that type 1 has relative fitness advantage $r$ over type 2. The effective game matrix for the two environments over a long period of time is simply $A_r  = p A_s + (1-p) A_t$, and so $r = p s + (1-p) t$. 

For the replicator equation with game matrix $A_r$, either type $A$ or $B$ will fixate depending on whether $r > 1$ or not. In terms of the stochastic environment parameter $p$, $r > 1$ iff 
\begin{equation}
p (s - t) > (1-t) 
\label{balance_condition}
\end{equation}
So we see that fixation depends on the rarity of the environmental stochasticity and the relative fitnesses. The special case in which $s > t = 1$, meaning that both types have the same fitness during environment $E_2$ but type 1 has an advantage during environment $E_1$, then inequality (\ref{balance_condition}) reduces to $p>0$. In other words, type 1 will eventually dominate due to its ability to respond to environment $E_1$.

\subsection{Frequency-dependent Fitness Landscapes}

Ross Cressman gives a classification of the phase portrait types of the replicator dynamics for linear 2x2 games as follows \cite{cressman2003evolutionary}. For a game matrix
\[ A_{a,b,c,d} = \left( \begin{matrix} a & b \\ c & d \end{matrix} \right),\]
we have the four phase portraits described in Table \ref{two_by_two}.
\begin{figure}
\begin{tabular}{ccccc}
Name & Short Name & Conditions & Stable Equilibria & Unstable Equilibria\\
\hline
Prisoner's Dilemma & P1 & $a \leq c$, $d \geq b$ & $(1,0)$ & $(0,1)$\\
Prisoner's Dilemma & P2 & $a \geq c$, $d \leq b$ & $(0,1)$ & $(1,0)$\\
Hawk-Dove & HD & $a < c$, $d < b$ & $(\hat{x}, 1 - \hat{x})$ & $(0,1)$, $(1,0)$\\
Coordination & Co & $a>c$, $d>b$ &$(0,1)$, $(1,0)$ & $(\hat{x}, 1 - \hat{x})$
\end{tabular}
\caption{Non-degenerate $2 \times 2$ game behaviors (not both $a=c$ and $b=d$). Each has a distinct phase portrait and equilibria structure. The phase portraits of degenerate cases where $a=c$ and $d=b$ are stationary at all points. The internal rest point of the dynamic is $\hat{x} = (d-b) / (d-b + a-c)$.}
\label{two_by_two}
\end{figure}

Constant relative fitness landscapes are special cases of P1 and P2. We wish to know under what conditions can a stochastically switching environment alter the behavior of the replicator dynamic \emph{qualitatively}, i.e. in terms of the phase portrait. The linear combination of two arbitrary landscapes could have any particular portrait for various values of $p$. We start with a simple example of combining the two Prisoner's dilemma types. Let $A(p) = p A_{0,0,1,0} + (1-p) A_{0,1,0,0}$. Then we have that $A(0)$ is in class P1, $A(1)$ is class P2, and all other $p$ are HD, since $A(p) = A_{0,1-p,p,0}$ which has an internal stable equilibrium at $(p, 1-p)$. Similarly, if $A(p) = p A_{0,0,0,1} + (1-p) A_{1,0,0,0}$, then the boundary points $A(0)$ and $A(1)$ are of class P1 and P2 respectively and $A(p)$ is Co for all other $p$, again with internal equilibrium (but unstable) at $(p, 1-p)$. So we see that for some $E_1$ matrices, control over the stochastic alternate environment $E_2$ would allow the complete control over the dynamic outcomes of the combined system.

Any portrait can be altered to any other portrait with the right choice of alternate environment and probability of occurrence. Theorem \ref{perturbation_thm} shows that any $2 \times 2$ phase portrait can be perturbed by a stochastic landscape into any other phase portrait. That there are infinitely many such matrices is a result of the fact that adding a constant to any particular column of the game matrix does not alter the phase portrait \cite{hofbauer1998evolutionary}.

\begin{theorem}
Let $E_1, E_2 \in \{\text{P1}, \text{P2}, \text{HD}, \text{Co}\}$ and $p^{*} \in (0,1)$. For each matrix $A_1$ with elements such that $A_1$ is a game matrix for phase portrait $E_1$, there are infinitely many $A_2$ with phase portrait $E_2$ such that $A(p) = p A_1 + (1-p) A_2$ has portrait $E_1$ for $0 < p < p^{*}$ and has portrait $E_2$ for $p^{*} < p < 1$.
\label{perturbation_thm}
\end{theorem}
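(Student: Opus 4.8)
The plan is to collapse the four-way classification to a single planar point and then argue geometrically. For a game matrix $A_{a,b,c,d}$ set $u = a-c$ and $v = d-b$. Reading off Table~\ref{two_by_two}, the interior rest point is $\hat x = v/(u+v)$ and the portrait is determined entirely by the pair of signs $(\operatorname{sign} u, \operatorname{sign} v)$: the four named types are the four open quadrants of the $(u,v)$-plane, the coordinate axes (one of $u,v$ zero) fall into the weak-inequality types P1/P2, and the origin $u=v=0$ is the fully degenerate case. Since $u$ and $v$ are linear in the entries, for $A(p) = p A_1 + (1-p) A_2$ one has $u(p) = p u_1 + (1-p) u_2$ and $v(p) = p v_1 + (1-p) v_2$, so as $p$ runs over $[0,1]$ the point $(u(p),v(p))$ traces the straight segment from $(u_2,v_2)$ at $p=0$ to $(u_1,v_1)$ at $p=1$. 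The portrait changes exactly when this segment meets a coordinate axis, so the theorem reduces to placing that crossing at the prescribed $p^*$, with $A_1$'s portrait on the side containing $p=1$ and $A_2$'s portrait on the side containing $p=0$.

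The endpoint $(u_1,v_1)$ is fixed by the given $A_1$ and sits in the $E_1$ quadrant; I must choose $(u_2,v_2)$ in the $E_2$ quadrant so that the segment crosses into the $E_1$ quadrant exactly at $p^*$ and nowhere else. I would separate the \emph{adjacent} quadrant pairs (differing in the sign of exactly one coordinate, e.g.\ Co/P1 or Co/P2) from the \emph{opposite} pairs (differing in both signs, i.e.\ Co/HD and P1/P2). Suppose first that only the sign of $u$ must flip (the case where only $v$ flips is identical after exchanging $u$ and $v$). I impose the single condition $u(p^*)=0$, i.e.\ $(1-p^*) u_2 = -p^* u_1$, which forces $u_2 = -\tfrac{p^*}{1-p^*} u_1$, automatically of the sign required by $E_2$; I then take $v_2$ to be any value with the same sign as $v_1$. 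Then $u(p)$ is affine with its unique zero at $p^*$ and $v(p)$ keeps constant sign on $(0,1)$, producing exactly the two desired portraits on $(0,p^*)$ and $(p^*,1)$. For an opposite pair I impose \emph{both} $u(p^*)=0$ and $v(p^*)=0$, forcing the segment through the origin at $p^*$ and hence $(u_2,v_2) = -\tfrac{p^*}{1-p^*}(u_1,v_1)$; both coordinates then change sign together at $p^*$, so the segment lies in the $E_2$ quadrant for $p<p^*$ and the $E_1$ quadrant for $p>p^*$ with no intermediate quadrant.

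Infinitely many admissible $A_2$ then follow from the invariance already noted in the text: adding a constant to either column of $A_2$ leaves both $u_2$ and $v_2$ unchanged, hence preserves the portrait of $A_2$ and, because $u(p)$ and $v(p)$ depend on $A_2$ only through $(u_2,v_2)$, preserves the entire trajectory $(u(p),v(p))$. Thus the two-parameter family of column-shifts of any single solution $A_2$ consists entirely of matrices satisfying the conclusion, and in the adjacent case one gains a further free parameter since $v_2$ ranges over a half-line. Every member of the family exhibits the same switch at $p^*$.

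The step I expect to be the genuine obstacle is the opposite-quadrant transition together with the boundary bookkeeping forced by the weak inequalities in the definitions of P1 and P2. Routing the segment through the origin makes $A(p^*)$ degenerate (every state stationary) rather than of type $E_1$ or $E_2$; this is harmless only because the conclusion concerns the \emph{open} intervals $(0,p^*)$ and $(p^*,1)$, so the single value $p^*$ is excluded, and I would state this explicitly. Similar care is needed when the given $A_1$ is a boundary representative of P1 or P2, so that $(u_1,v_1)$ already lies on a coordinate axis: one must check that the constructed $(u_2,v_2)$ still lands in the correct region and that no second crossing is introduced, which holds because a nonconstant affine function has a single zero. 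Confirming that these degenerate sub-cases do not disturb the open-interval conclusion is where the work lies, and it is bookkeeping rather than a new idea.
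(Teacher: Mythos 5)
Your proposal is correct and takes essentially the same route as the paper: both reduce the portrait classification to the signs of $a-c$ and $d-b$, observe these are affine in $p$ under $A(p) = pA_1 + (1-p)A_2$, and place the sign change exactly at $p^*$ via the same formula (your $u_2 = -\tfrac{p^*}{1-p^*}u_1$ is precisely the paper's $c_2 = a_2 - \tfrac{p^*}{1-p^*}(c_1 - a_1)$). Your write-up is somewhat more complete than the paper's, which proves only the single-sign-flip case and dismisses the rest by ``independence,'' whereas you explicitly route opposite-quadrant transitions through the origin and flag the degenerate boundary representatives of P1/P2 (a case neither proof fully resolves, since there the sign of $u(p)$ cannot flip at all on $(0,1)$).
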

\begin{proof}
Since the signs of $a-c$ and $b-d$ are independent, it suffices to show just a single special case, namely that if $a_1 < c_1$ then there are $a_2, c_2 \in \mathbb{R}$ with $a_2 > c_2$ and the matrices $A(p)$ are as in the conclusion of the theorem. Given $A_1$ and $p^{*}$, let $b_2 = b_1$, $d_2 = d_1$, and note that $a-c = 0$ when $p = p^{*}$. Then $a_2$ can be chosen arbitrarily and $c_2 = a_2 - \frac{p^{*}}{1 - p^{*}} (c_1 - a_1)$.
\end{proof}

For the case of a HD to Co transition (or vice versa), the interior rest point moves to the boundary, changes stability for one instance, and moves back into the interior, in contrast to the examples given above. In this context, $A(p) = p A_1 + (1-p) A_2$ has the portrait class of $E_1$ if $A_2$ is any multiple of $A_{1,0,1,0}$ or $A_{0,1,0,1}$.

The classification of dynamics for $3 \times 3$ games is much more extensive \cite{bomze1983lotka} \cite{bomze1995lotka}. We consider only Rock-Scissors-Paper 3x3 games, with game matrix
\[ A_{a,b} = \left( \begin{matrix} 0 & a & -b \\ -b & 0 & a \\ a & -b & 0 \end{matrix} \right).\]
Similarly to the $2\times 2$ case, $p A_{a_1,b_1} + (1-p) A_{a_2, b_2} = A_{a', b'}$ where $a' = a_2 + p (a_1 - a_2)$ and $b' = b_2 + p (b_1 - b_2)$. For the RSP matrix there are essentially three outcomes: (1) $a = b \neq 0$: concentric orbits about the barycenter $(1/3, 1/3, 1/3)$; (2) $a > b$: divergence to the boundary; and (3) $a < b$: convergence to the barycenter. Hence if we have $a_1 = b_1$ and $a_2 < b_2$, then $a' - b' = (1-p)(a_2 - b_2) < 0$, so $a' < b'$ and the stochastic switching dynamic converges to the barycenter for any $p$ such that $0 < p < 1$. In other words, the stochastic environmental shifts are enough to break the concentric cycles and stabilize the population.

\section{Replicator Dynamics for Asymmetric Landscapes}

Now we consider examples where the fitness landscapes are described by separate matrices for each type. Given a population with two subpopulations $S_1$ and $S_2$, the payoffs for the interactions between them in the environment $E_1$ are
\[
\begin{array}{c|c|c|}
 \multicolumn{1}{c}{ } & \multicolumn{1}{c}{S_1 }  &  \multicolumn{1}{c}{S_2 } \\
\cline{2-3}
S_1 & a_{11},a_{11} & a_{12},a_{21} \\
\cline{2-3}
S_2 & a_{21},a_{12} & a_{22},a_{22}   \\
\cline{2-3}
\end{array}.
\]
Denote the payoff matrix as $A$. During environment $E_2$ the payoff for the types changes to 
\[ \begin{array}{c|c|c|}
 \multicolumn{1}{c}{ } & \multicolumn{1}{c}{S_1 }  &  \multicolumn{1}{c}{S_2 } \\
\cline{2-3}
S_1 & b_{11},b_{11} & b_{12},b_{21} \\
\cline{2-3}
S_2 & b_{21},b_{12} & b_{22},b_{22}   \\
\cline{2-3}
\end{array}, \]
which we denote the payoff matrix as $B$. As before we can write the expected game for the population as
\[ C = \left(
\begin{array}{cc}
 \big(1-p\big) a_{11} + pb_{11} &  \big(1-p\big) a_{12} + p b_{12} \\
\big(1-p\big) a_{21} + pb_{21} & \big(1-p\big) a_{22} + p b_{22}
\end{array} 
\right). \]

Consider the case when strategy 1 (type 1) is the only Nash equilibria for the payoff matrix $A$ (hence $S_1$ dominates), and strategy 2 (type 2) is the only Nash equilibria for the payoff matrix $B$ (hence $S_2$ dominates).  For the payoff matrix $C$, if $\displaystyle  c_{11} > c_{21}$, then a Nash equilibria exists at $\Big(  (1,0) , (1,0) \Big)$. Notice that $\displaystyle  c_{11} > c_{21}$  is equivalent to $\displaystyle \frac{ a_{11} -a_{21} }{  b_{21}-b_{11}  } > \frac{p}{1-p}$. Similarly, $C$ has a Nash Equilibria at $\Big(  (0,1) , (0,1) \Big)$ if $\displaystyle  \frac{p}{1-p} > \frac{  a_{12}-a_{22}  }{ b_{22} -b_{12} }$. If the inequalities $\displaystyle \frac{ a_{11} -a_{21} }{  b_{21}-b_{11}  } < \frac{p}{1-p}$ and $\displaystyle  \frac{p}{1-p} < \frac{  a_{12}-a_{22}  }{ b_{22} -b_{12} }$ hold, strategy 1 and strategy 2 are respectively dominated. 

\bigskip
\noindent From theses inequalities, we see that
\begin{enumerate}
\item Strategy 1 dominates if $\displaystyle \min \left\{  \frac{ a_{11} -a_{21} }{  b_{21}-b_{11}  } , \frac{  a_{12}-a_{22}  }{ b_{22} -b_{12} } \right\} > \frac{p}{1-p} $;
\bigskip
\item Strategy 2 dominates if $\displaystyle \max \left\{  \frac{ a_{11} -a_{21} }{  b_{21}-b_{11}  } , \frac{  a_{12}-a_{22}  }{ b_{22} -b_{12} } \right\} < \frac{p}{1-p}  $;
\bigskip
\item $C$ is a Co game if $\displaystyle \frac{ a_{11} -a_{21} }{  b_{21}-b_{11}  } > \frac{p}{1-p} >  \frac{  a_{12}-a_{22}  }{ b_{22} -b_{12} }$;
\bigskip
\item $C$ is a HD game if $\displaystyle \frac{ a_{11} -a_{21} }{  b_{21}-b_{11}  } < \frac{p}{1-p} <  \frac{  a_{12}-a_{22}  }{ b_{22} -b_{12} }$.
\end{enumerate}

Thus, if $p \ll 1$ or $1-p \ll 1$ then the population will fixate to $S_1$ or $S_2$, respectively. If the probability $p$ is not too large (or small), then the mixture gives rise to a HD or Co game depending on the inequality of the ratios of the differences. Notice for the coordination game, since the initial frequencies of the population would dictate evolution, a mutant subpopulation, though dominate during environment $E_2$, would not be able to invade. Generally, given two arbitrary payoff matrices $A$ and $B$, one can see that if a strategy is a Nash equilibria in both $A$ and $B$ then this strategy is also a Nash equilibria in $C$. Similarly, if a strategy is dominated in both $A$ and $B$ then this strategy is also dominated in $C$. An interesting consequence is when a strategy is a Nash equilibria in $A$, and the same strategy is dominated in $B$. Then there is a possibility for either characteristic for this strategy in $C$, given the proper parameter $p$.

In contrast to the symmetric case, particular choices of $A$ and $B$ can have different portraits for three intervals of nonzero length for $p \in [0,1]$. Consider the explicit example when 
\[A = \left(
\begin{array}{cc}
 5 &  4 \\
3 & 0
\end{array} 
\right)
\ \mbox{and} \
B= \left(
\begin{array}{cc}
-4 &  -12 \\
-1 & -6
\end{array} 
\right). \]
Calculating the ratios we see that $\displaystyle \frac{ a_{11} -a_{21} }{  b_{21}-b_{11} } =\frac{2}{3}$ and $\displaystyle \frac{  a_{12}-a_{22}  }{ b_{22} -b_{12} }=\frac{4}{5}$. If $ p<2/5$, the population will evolve to $S_1$; if $p>4/9$, the populace will evolve to $S_2$; and if $2/5<p<4/9$, the populace will evolve to a mixed population with the frequency $\displaystyle \left( \frac{3p}{1+2p}, \frac{1-p}{1+2p}  \right)$.

\section{Moran Process}

This replicator dynamics-based model assumes both a large population and that there are many environment switches such that they can be modeled as an average of $E_1$ $p$ of the time and $E_2$ $1-p$ of the time. We present an alternate model lacking both of these assumptions based on the Moran process for constant relative fitness landscapes. Consider a population as above of size $N$, with $i$ individuals of type $1$ and $N-i$ individuals of type $2$. Flip a $p$-biased coin, choosing $E_1$ with probability $p$ and $E_2$ with probability $1-p$. Then choose the appropriate game matrix corresponding to either $E_1$ or $E_2$ and proceed with the Moran process associated to this game matrix for one iteration, choosing an individual to reproduce proportionally to fitness and an individual to be replaced at random. The transition probabilities are:

\begin{align*}
T_{i \to i+1} &= \frac{i f_i}{i f_i + (N-i) f_2} \frac{N-i}{N} \\
T_{i \to i-1} &= \frac{(N-i) f_2}{i f_1 + (N-i) f_2} \frac{i}{N}, \\
\end{align*}
with $T_{i \to i} = 1 - T_{i \to i+1} - T_{i \to i-1}$. The fitness landscape is given by
\begin{align*}
f_1(i) &= \frac{a(i-1) + b(N-i)}{N-1} \\
f_2(i) &= \frac{ci + d(N-i-1)}{N-1}
\end{align*}
for a game matrix defined by
\[ A = \left( \begin{matrix}
 a & b\\
 c & d
\end{matrix} \right) \]
The classical Moran process is given by $a=r=b, c=1=d$. The process has two absorbing states $i=0$ and $i=N$, corresponding to the fixation of one of the two types. The sequence of coin flips will have a substantial impact on the eventual outcome of the process (i.e. to which type the population fixates). First let us consider the \emph{expected} outcome, averaged over the possible sequence of environmental states. Again suppose that we can model the population with an effective game matrix $A_r  = p A_s + (1-p) A_t$. The fixation probability of type 1 starting from population state $(i, N-i)$ is 
\begin{equation}
\phi = \frac{1 - r^{-i}}{1 - r^{-N}}
\label{moran_fixation_probability} 
\end{equation}
where $r = p s + (1-p) t$ as before. Consider the probability that a single mutant adapted to environment $E_1$ will fixate. The fixation probability is:
\[\phi = \frac{1 - r^{-1}}{1 - r^{-N}}\]
with $r = p(s-t) + t$. Consider a large population $N$ with $r > 1$ so that $r^{-N}$ is negligible. Then $\phi = 1 - 1/(p(s-t)+t)$, and depends on the value of $p$, $t$, and the difference in relative fitnesses $s-t$. As expected, larger values of $p$ and the fitness difference $s-t$ lead to larger values of $\phi$. Moreover, we see that if type 1 has a very large advantage during a very rare event, fixation may be very likely. Type 1 may proliferate during environment $E_2$ simply due to drift during the neutral fitness landscape, and when the rare environment $E_1$ occurs, type 1 dominates the fitness proportionate selection events. 

In general the same reasoning applies, tempered by the population size $N$. For large $N$, fixation becomes likely ($\phi > 1/2$) when $r > 2$, that is, when $p > (2-t) / (s-t)$, assuming $s > t$. Of course, variant types may persist for a long period of time before either type fixates. Compare this to the derivation above showing that the deterministic dynamic fixates the variant type if $p > (1-t) / (s-t)$, and note that $(2-t) / (s-t) = 1/ (s-t) + (1-t) / (s-t)$. Depending on the values of $t$ and $s$, the Moran process can predicts likely elimination even in cases where the deterministic dynamic fixates the mutant! For example, if $s=3$ and $t=1/2$, the the deterministic landscape fixates for $p > 2/5$ but the Moran process does not predict likely fixation until $p > 3/5$.

\begin{figure}
    \centering
    \includegraphics[width=\textwidth]{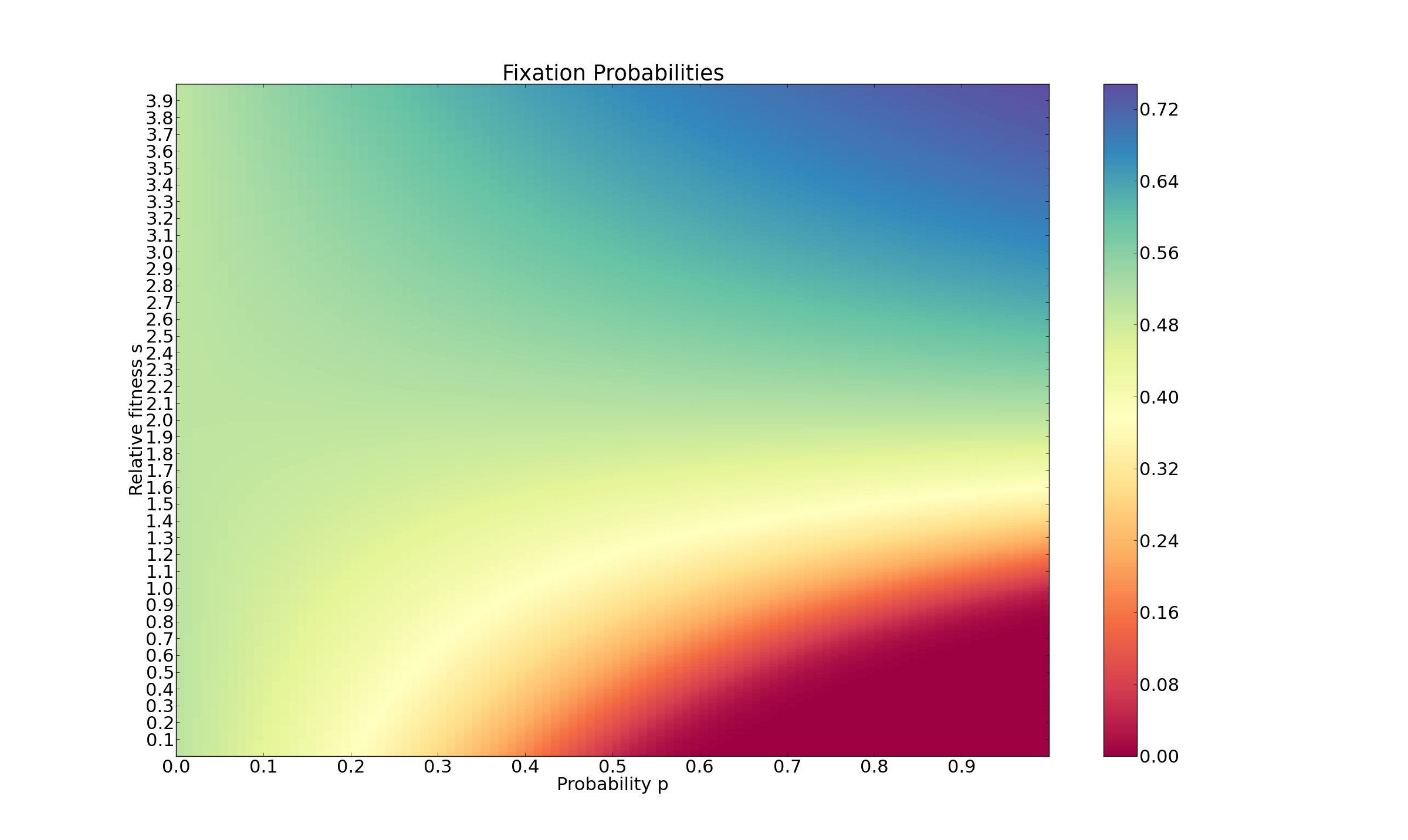}
    \caption{Fixation probabilities for a population of size $N=20$ and $t=1$. Along the horizontal axis runs the probability $p$; along the vertical runs the relative fitness $s$.}
\end{figure}

\subsection{Generational Processes}
Let us also briefly consider generational processes, such as the Wright-Fisher process \cite{imhof2006evolutionary} and the $n$-fold Moran process \cite{harper2013inherent}. The latter process is defined like the Moran processes, however each step of the process is defined not by a single replication event but rather by $N$-replication events, where $N$ is the population size. This does not change the fixation probabilities, except in the following special case. If $s=0$ and $t > 0$, when the population is subject to environment $E_1$, every fitness proportionate replication event will result in the reproduction of type $2$, so a fully generational process will fixate on type $2$. In other words, if the population must undergo an entire generation in one environment, it may strongly select the more adapted type than if the stochastic environmental changes are more fleeting.

A concrete example would be an organism that lives in an environment that periodically floods, say $E_1$, like certain amphibians. If water is required to reproduce successfully for one of the two types, say type $1$, $T_1$-individuals can only reproduce when environment $E_1$ is active (meaning $t=0$). A prolonged period (such as the time required for a full generation to pass) without flooding may lead to the extinction of type $1$ completely, even if type $1$ is far more effective than type $2$ at reproducing during flooding (i.e. $s >> 1$).

\section{Stochastic Incentive Switching}

Now we consider models which allow not only how the fitness landscape to change but also how the population interacts with the fitness landscape. We introduce the notion of an incentive, a functional parameter with the following properties that simultaneously generalizes common game dynamics like the replicator, the orthogonal-projection, the logit, and the best-reply dynamics. We briefly cover preliminaries here; see \cite{fryer2012existence} \cite{harper2012stability}. Table \ref{incentives_table} lists incentive functions for some common dynamics.  

Motivated by game-theoretic considerations, the incentive dynamics takes the form
\begin{equation}
\dot{x_i} = \incentive_i(x) - x_i \sum_{j}{\incentive_j(x)}.
\label{incentive_dynamic}
\end{equation}

\begin{figure}[h]
    \centering
    \begin{tabular}{|c|c|}
        \hline
        Dynamics & Incentive \\ \hline         \hline
        Replicator & $\incentive_i(x) = x_i \left(f_i(x) - \bar{f}(x)\right)$\\ \hline
        Best Reply & $\incentive_i(x) = BR_i(x) - x_i$ \\ \hline
        Logit & $\displaystyle{ \incentive_i(x) = \frac{\text{exp}(\eta^{-1}f_i(x))}{\sum_j{\text{exp}(\eta^{-1}f_j(x))}}}$ \\ \hline
        Projection & $\incentive_i(x) = \begin{cases}
                    & \displaystyle{f_i(x) - \frac{1}{|S(f(x), x)|}\sum_{j \in S(f(x), x)}{f_j(x)}} \quad \text{if $i \in S(f(x), x)$}\\
                    &0 \qquad \text{else}
                    \end{cases}$\\
        \hline
        \end{tabular}
    \caption{Incentives for some common dynamics, where $BR_i(x)$ is the best reply to state $x$, $S(f(x), x)$ is the set of all strategies in the support of $x$ as well as any collection of pure strategies in the complement of the support that maximize the average. Note that on the interior of the simplex the projection incentive is just $\incentive_i(x) = f_i(x) - 1/n \sum_j{f_j(x)}$. For more examples see Table 1 in \cite{fryer2012existence}.}
    \label{incentives_table}
\end{figure}

One interpretation of an incentive is that it mediates how the population interacts with the fitness landscape, though strictly-speaking an incentive requires no fitness landscape. The natural stability concept for incentive dynamics is the incentive stable state (ISS), and there is a natural generalization \cite{fryer2012kullback} of the well-known Lyapunov theorem \cite{akin1984evolutionary} for the replicator equation to incentive dynamics. An incentive stable state is a state $\ess{x}$ such that in a neighborhood of $\ess{x}$ the following inequality holds
\begin{equation}
 \sum_{i}{ \frac{\ess{x}_i \incentive_i(x)}{x_i}} > \sum_{i}{ \frac{x_i \incentive_i(x_i)}{x_i}}.
 \label{iss} 
\end{equation}

The KL-divergence on discrete probability distributions is a positive definite function defined \cite{kullback1951information} as
\[D_{KL}(x) = D_{KL}(\ess{x} || x) = \sum_i{\ess{x}_i \log {x_i} - \ess{x}_i \log \ess{x}_i}.\]

\begin{theorem*}[Fryer, 2011]
If the state $\ess{x}$ is an interior incentive stable state for the corresponding incentive dynamics, then $D_{KL}(x)$ is a local Lyapunov function.
\label{kl_divergence_incentives}
\end{theorem*}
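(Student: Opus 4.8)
The plan is to show that $D_{KL}(x)$ is a valid local Lyapunov function, which means establishing two properties in a neighborhood of the interior incentive stable state $\ess{x}$: first, that $D_{KL}(x) \geq 0$ with equality iff $x = \ess{x}$, and second, that $\dot{D}_{KL}(x) \leq 0$ along trajectories of the incentive dynamic, with equality only at $\ess{x}$. The first property is immediate from the fact that the KL-divergence is positive definite (as stated in the excerpt), so the real work is entirely in the computation and sign analysis of the time derivative.

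First I would differentiate $D_{KL}(x)$ along trajectories. Since $\ess{x}$ is fixed, the terms $\ess{x}_i \log \ess{x}_i$ are constant and drop out, so
\begin{equation}
\dot{D}_{KL}(x) = \sum_i \frac{\partial}{\partial x_i}\left(\ess{x}_i \log x_i\right)\dot{x}_i = \sum_i \frac{\ess{x}_i}{x_i}\,\dot{x}_i.
\label{dkl_derivative}
\end{equation}
Next I would substitute the incentive dynamic \eqref{incentive_dynamic}, $\dot{x}_i = \incentive_i(x) - x_i \sum_j \incentive_j(x)$, into \eqref{dkl_derivative}. Distributing the sum gives two pieces: a first term $\sum_i \ess{x}_i \incentive_i(x) / x_i$, and a second term $-\sum_i \ess{x}_i \sum_j \incentive_j(x)$, where the $x_i$ in the denominator cancels the $x_i$ multiplying the mean-incentive factor. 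Because $\sum_i \ess{x}_i = 1$ (as $\ess{x}$ is a point in the simplex), the second piece collapses to $-\sum_j \incentive_j(x)$, which I would rewrite as $-\sum_i \incentive_i(x)$. The key observation is then that $\sum_i \incentive_i(x) = \sum_i x_i \incentive_i(x)/x_i$, so the entire expression becomes exactly the difference of the two sides of the ISS inequality \eqref{iss}.

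The concluding step is to invoke the incentive stable state hypothesis directly: inequality \eqref{iss} states precisely that in a neighborhood of $\ess{x}$,
\[
\sum_i \frac{\ess{x}_i \incentive_i(x)}{x_i} > \sum_i \frac{x_i \incentive_i(x)}{x_i},
\]
which means $\dot{D}_{KL}(x) > 0$ is false; rather the computation shows $\dot{D}_{KL}(x)$ equals the left side minus the right side. I would therefore check the sign conventions carefully, since the Lyapunov condition for a minimum requires $\dot{D}_{KL} \leq 0$, and the ISS definition as written gives the left-minus-right quantity a definite sign in the punctured neighborhood. The main obstacle I anticipate is precisely this bookkeeping of signs and the orientation of the Lyapunov condition: one must confirm that the ISS inequality yields the correct decreasing (or nonincreasing) behavior of $D_{KL}$ toward its minimum at $\ess{x}$, and that strict inequality away from $\ess{x}$ gives asymptotic stability rather than mere stability. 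Once the algebra collapses $\dot{D}_{KL}$ into the ISS expression, everything else follows from the positive-definiteness of the divergence and the standard Lyapunov stability theorem.
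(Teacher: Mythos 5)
Your computation is the right one and is essentially the entire proof. Note first that the paper itself does not prove this statement --- it is imported from Fryer's cited work --- so the comparison is against the standard argument, which is exactly what you reconstruct: differentiate $\sum_i \ess{x}_i \log x_i$ along trajectories, substitute the incentive dynamic, use $\sum_i \ess{x}_i = 1$ to collapse the mean-incentive term, and recognize the result as the difference of the two sides of the ISS inequality. That is the whole mechanism, and your algebra for it is correct (including the implicit restriction to the interior, where dividing by $x_i$ and writing $\sum_i \incentive_i(x) = \sum_i x_i \incentive_i(x)/x_i$ is legitimate; since $\ess{x}$ is interior, the neighborhood may be taken inside the open simplex).

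The one genuine flaw is the sentence ``which means $\dot{D}_{KL}(x) > 0$ is false,'' and the fact that you leave the sign question unresolved rather than settling it. The resolution is concrete: the paper's displayed formula $D_{KL}(x) = \sum_i \ess{x}_i \log x_i - \ess{x}_i \log \ess{x}_i$ contains a sign typo --- by Jensen's inequality that expression is $\leq 0$, so it is the \emph{negative} of the Kullback--Leibler divergence, not the positive definite function the paper claims. With the paper's formula as written, your computation correctly gives $\dot{D}_{KL} = \sum_i \ess{x}_i \incentive_i(x)/x_i - \sum_i x_i \incentive_i(x)/x_i > 0$ near $\ess{x}$ by ISS, so that function is negative definite and strictly increasing toward its maximum at $\ess{x}$, which still yields stability. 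With the standard (intended) definition $D_{KL}(\ess{x} \,\|\, x) = \sum_i \ess{x}_i \log\left(\ess{x}_i/x_i\right) \geq 0$, the derivative acquires an overall minus sign,
\begin{equation*}
\dot{D}_{KL} = -\sum_i \frac{\ess{x}_i}{x_i}\,\dot{x}_i = -\left( \sum_i \frac{\ess{x}_i \incentive_i(x)}{x_i} - \sum_i \frac{x_i \incentive_i(x)}{x_i} \right) < 0
\end{equation*}
for $x \neq \ess{x}$ in the neighborhood where the ISS inequality holds, which is exactly the decreasing condition for a positive definite Lyapunov function, and strictness gives asymptotic stability. Either orientation closes the proof; what a rigorous write-up cannot do is leave the choice hanging as ``bookkeeping to check carefully.'' Fix that one sentence, commit to a convention, and your proof is complete and identical in substance to the standard one.
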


The incentive $\incentive_i(x) = x_i f_i(x)$ captures the classical result for the replicator dynamics \cite{akin1984evolutionary} for all incentives locally, with the definition of ISS being exactly evolutionary stability (ESS): $\ess{x} \cdot f(x) > x \cdot f(x)$. For many common dynamics, such as the replicator, best reply, and orthogonal projection dynamics, the ISS condition is the same as ESS. This allows us to formulate the following theorem, which is an easy and direct consequence of the definition of incentive and Fryer's Theorem, and so we omit the proof.
\begin{theorem}
Let $\incentive$ and $\psi$ be incentives. Then
\begin{enumerate}
 \item the function $\incentive_p = p \incentive + (1-p) \psi$ is an incentive for all $p \in [0,1]$; and \\
 \item if $\ess{x}$ is an ISS for both $\incentive$ and $\psi$ then $\ess{x}$ is an ISS for $\incentive_p$ and the KL-divergence is a local Lyapunov function for the incentive dynamic defined by $\incentive_p$.
\end{enumerate}
\end{theorem}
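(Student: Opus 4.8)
The plan is to exploit the fact that both the incentive dynamic (\ref{incentive_dynamic}) and the ISS inequality (\ref{iss}) are \emph{linear} in the incentive function, so that the whole statement reduces to forming a convex combination of objects already assumed to exist.

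First I would verify part (1). Substituting $\incentive_p = p \incentive + (1-p) \psi$ into the right-hand side of (\ref{incentive_dynamic}) and distributing gives
\[ \incentive_{p,i}(x) - x_i \sum_j \incentive_{p,j}(x) = p\Big(\incentive_i(x) - x_i \sum_j \incentive_j(x)\Big) + (1-p)\Big(\psi_i(x) - x_i \sum_j \psi_j(x)\Big), \]
so the $\incentive_p$-dynamic is a genuine flow on the simplex: its vector field is the convex combination of two vector fields each tangent to the simplex, hence itself tangent to the simplex. Whatever pointwise regularity or support conditions the definition of an incentive imposes are preserved under taking nonnegative linear combinations, so $\incentive_p$ is an incentive for every $p \in [0,1]$.

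For part (2), I would take the corresponding convex combination of the two ISS inequalities. By hypothesis there are neighborhoods $U_1, U_2$ of $\ess{x}$ on which $\sum_i \ess{x}_i \incentive_i(x)/x_i > \sum_i \incentive_i(x)$ and $\sum_i \ess{x}_i \psi_i(x)/x_i > \sum_i \psi_i(x)$ hold, respectively. On $U_1 \cap U_2$, again a neighborhood of $\ess{x}$, I would multiply the first inequality by $p$ and the second by $1-p$ and add; by linearity the left side collapses to $\sum_i \ess{x}_i \incentive_{p,i}(x)/x_i$ and the right side to $\sum_i \incentive_{p,i}(x)$. For $p \in (0,1)$ both weights are strictly positive, so two strict inequalities sum to a strict one; the endpoints $p \in \{0,1\}$ reduce directly to the hypotheses. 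Thus $\ess{x}$ satisfies (\ref{iss}) for $\incentive_p$ and is an ISS for it. Since $\ess{x}$ is interior and $\incentive_p$ is an incentive by part (1), Fryer's Theorem then immediately yields that $D_{KL}(x)$ is a local Lyapunov function for the $\incentive_p$-dynamic.

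I do not expect a serious obstacle. The only points requiring care are the quantifier on neighborhoods in the ISS definition — dispatched by intersecting $U_1$ and $U_2$ — and the verification that the structural conditions bundled into the word ``incentive'' really are stable under convex combination; since those conditions are stated pointwise and the operations involved are nonnegative linear, this is routine.
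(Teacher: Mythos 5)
Your proposal is correct and is exactly the argument the paper has in mind: the paper omits the proof, calling the theorem ``an easy and direct consequence of the definition of incentive and Fryer's Theorem,'' and your write-up simply fills in that linearity/convex-combination argument (including the right care with neighborhoods and the endpoint cases $p\in\{0,1\}$). The only point worth noting is that you invoke interiority of $\ess{x}$ when applying Fryer's Theorem; this hypothesis is implicit in the paper's statement rather than explicit, so flagging it as an assumption is appropriate rather than a flaw.
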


Suppose that $f$ is a fitness landscape with $ESS$ $\ess{x}$ and let $\incentive$ and $\psi$ to be the incentives for the replicator and the best reply dynamics, respectively, associated with the landscape $f$. Then for all $p \in [0,1]$, $\incentive_p$ has ESS $\hat{x}$ and we have a local Lyapunov function from the KL-divergence. Similarly, we can combine the replicator incentive and the projection incentive (for interior trajectories of the simplex), and similarly obtain a local Lyapunov function for the dynamic. The function $D_0(x) = (1/2)||\hat{x} - x||^2$ is a global Lyapunov function for the projection dynamic \cite{lahkar2008projection} \cite{harper2011escort}. A straight-forward computation shows that $V = p D_{KL}(x) + (1-p) D_0(x)$ is a Lyapunov function for $\incentive_p = p \incentive + (1-p) \psi$ for all $p \in [0,1]$ (if $\ess{x}$ is ESS for both incentives).

We also give examples where the phase portrait qualitatively changes while keeping the landscape constant. Let $\incentive$ be the replicator incentive with the fitness landscape defined by the RSP matrix $A$ defined by $a=b=1$, which has concentric (non-attracting) cycles and a rest point at the barycenter of the simplex. If we take $\psi_i = \sum_{j=i}^{3}{(A_{ij} - x \cdot Ax)_{+}} = a = 1$, then the dynamic defined by $\incentive_p$ has phase portrait with asymptotically stable rest point at the barycenter (qualitatively as if $b > a$ for an RSP matrix). The dynamic associated to $\psi$ is $\dot{x}_i = a(1 - 3x_i)$, which has ISS at the barycenter \cite{fryer2012uniform}, and exponential trajectories from $x$ to $\hat{x}$ with rate $-3a$. Similarly, for a diverging RSP landscape, a constant incentive $\psi$ can cause the phase portrait to change to convergence to the barycenter, depending on the relative values of $p$, $a$, and $b$. For instance, $p=0.9$, $a=1$, $b=2$ is such an example. This shows that addition of an alternative incentive can turn an unstable equilibrium into an ISS. It is easy to see that the addition of a large enough constant incentive would do the same for all $p$.

\section{Discussion}

We have considered mean dynamics for several models for the fixation of traits for organisms that experience two distinct environments, encoded by changes in the fitness landscape or changes in the evolutionary incentive. For deterministic models, we have shown that the addition of a second landscape can alter the phase portrait of the population under one of the environments alone, splitting up the parameter space for the probability $p$ in various ways, depending on the landscapes used, and whether the model uses a symmetric or asymmetric landscape. We have shown that an alternative incentive may preserve the phase portrait or alter it, in the process defining and analyzing novel evolutionary dynamics. For an analogous stochastic model, we see that the fixation probabilities are altered by the switching landscapes, and that there are cases where fixation is unlikely under the stochastic model but inevitable under the deterministic model.

\subsection*{Methods}
Figure \ref{moran_fixation_probability} was created with \emph{matplotlib} \cite{Hunter:2007}.

\subsection*{Acknowledgments}
Marc Harper acknowledges support by the Office of Science (BER),
U. S. Department of Energy, Cooperative Agreement No. DE-FC02-02ER63421.

\bibliography{ref}
\bibliographystyle{plain}

\end{document}